\newcommand{\z}{\mathrm{z}}
\begin{document}
\title{Density version of the Ramsey problem and  the directed Ramsey problem}

\author{{\bf Zolt\'an L\'or\'ant Nagy}\\ 
{\small MTA--ELTE Geometric and Algebraic Combinatorics Research Group}\\
{\small H--1117 Budapest, P\'azm\'any P.\ s\'et\'any 1/C, Hungary}\\
{\small \tt{  nagyzoli@cs.elte.hu}}}

\date{}

 \newtheorem{theorem}{\bf Theorem}[section]
 \newtheorem{lemma}[theorem]{\bf Lemma}
 \newtheorem{cor}[theorem]{\bf Corollary}
  \newtheorem{notion}[theorem]{\bf Notion}
 \newtheorem{prop}[theorem]{\bf Proposition}
 \newtheorem{conj}[theorem]{\bf Conjecture}
\newtheorem{example}[theorem]{\bf Example}
\newtheorem{remark}[theorem]{\bf Remark}
\newtheorem{defn}[theorem]{\bf Definition}
\newtheorem{defi}[theorem]{\bf Definition}
\newtheorem{problem}[theorem]{\bf Problem}
\newtheorem{Claim}[theorem]{\bf Claim}
\newtheorem{construction}[theorem]{\bf Construction}
\newtheorem{obs}[theorem]{\bf Observation}
\setcounter{equation}{0}

\maketitle

\begin{abstract} 
We discuss a variant of the Ramsey and the directed Ramsey problem. First, consider a complete graph on $n$ vertices and a two-coloring of the edges such that every edge is colored with at least one color and the number of bicolored edges $|E_{RB}|$ is given. 
The aim is to find  the maximal size $f$ of a monochromatic clique which is guaranteed by such a coloring. Analogously, in the second problem we consider semicomplete digraph on $n$ vertices such that the number of bi-oriented edges $|E_{bi}|$ is given. The aim is to bound the size $F$ of the maximal transitive subtournament that is guaranteed by such a digraph.

Applying probabilistic and analytic tools and constructive methods we show that if $|E_{RB}|=|E_{bi}| = p{n\choose 2}$, ($p\in [0,1)$),  then $f, F < C_p\log(n)$ where $C_p$ only depend on $p$, while if $m={n \choose 2} - |E_{RB}| <n^{3/2}$ then $f= \Theta (\frac{n^2}{m+n})$. The latter case is strongly connected to Tur\'an-type extremal graph theory.


\noindent \textbf{Keywords}: {Ramsey-theory \and transitive tournaments \and  two-coloring \and  extremal graph problems \and   feedback vertex set \and  d-degenerate graphs}

\end{abstract}

\section{Introduction} 

Ramsey's theorem  concerns  one of the classic questions in graph theory, aiming to determine accurate bounds on the Ramsey
number $R(k)$, the smallest number $n$ such that in any two-coloring of the edges of a  complete graph on
$n$ vertices, there is guaranteed to be a monochromatic clique of size $k$. An inverse approach to this problem is the following.  Let $c:   E(K_n)  \rightarrow \{\mbox{red, blue}\}$ be a $2$-coloring of the edges of the complete graph $K_n$. What is the largest number $f(n)$ such that there exists a monochromatic clique of size $f(n)$ in any  coloring $c$ of $K_n$? 

Clearly, $f(R(k))=k$, moreover $t<R(k)$ implies $f(t)<k$.
\smallskip

Due to Erd\H os, Szekeres and Spencer \cite{erd, esz, spen}, it is well known that the following bounds hold for $f(n)$ if $n\geq 2$:

\begin{equation}
 \frac{1}{2}\log_2(n) \leq f(n)\leq 2\log_2(n)\label{szeker}. \end{equation}
\noindent Although it is widely investigated, even the most significant improvements yet not have any affect on the main terms, of both bounds \cite{conl, spen}.

\medskip

In this paper we propose a variant of this problem.
Let us color the edges of $K_n$ in such a way that every edge is either unicolored (red, blue), or bicolored. That is, our modified general two-coloring function is of form $$c:   E(K_n)  \rightarrow \{\mbox{red, blue, red and blue}\}.$$ 
Let $E_{RB}(K_n)$ denotes the set of bicolored edges of $K_n$, and  $m= { n \choose 2}- |E_{RB}(K_n)| $ denotes the number of unicolored edges in a two-coloring of $K_n$. A subgraph $H$ of $K_n$ is called monochromatic in color red, respectively blue, if each edge of $H$ is colored with red, respectively blue color. (That is, this extension allows a subgraph to be monochromatic in both colors, if every edge is bicolored.)
 
\begin{problem}\label{elso}
Find the the size $f(n, m)$ of the largest monochromatic clique which is contained in any two-coloring of $K_n$ with $m$ unicolored edges. 
\end{problem}

\noindent Our goal is to study the order of magnitude of $f(n, m)$ in terms of $n$ and $m$, where $m$ is considered as a function of $n$.
\bigskip


Our second problem concerns the directed Ramsey number $\vec{R}(n)$ which was defined by Erd\H os and Moser, and it had been studied by various authors \cite{erdos, stearn, bang, reid, tabib, lara, flores}. 
Here the aim is to determine the size of the largest transitive subtournament  which appears in any tournament on $n$ vertices.
A \textit{tournament} $T$ is a digraph in which every pair of vertices is joined by exactly one directed edge. A \textit{subtournament} $T'$ of $T$ is a tournament induced by $V(T')\subseteq V(T)$. A tournament is \textit{transitive} if it is acyclic. Recall that this is equivalent to the property that it has a topological ordering, that  is, a linear ordering of its vertices such that for every directed edge $\vec{uv}$, $u$ comes before $v$ in the ordering.\\
The inverse approach appeared also in this problem, in the following sense.
 Let $F(n)$ denote the greatest integer $F$ such that all tournaments of order $n$ contain the transitive subtournament of order $F$.
 In \cite{stearn}, Stearns showed that $F(2n)\geq F(n)+1$ which provides $F(n)\geq \left\lfloor \log_2{n}\right\rfloor+1$. On the other hand, Erd\H os and Moser proved that $F(n)\leq \left\lfloor 2\log_2{n}\right\rfloor+1$, and conjectured that the lower bound of Stearns in fact holds with equality.  However this turned out to be false \cite{reid}. Later the lower bound was improved by  Sanchez-Flores \cite{flores}, who proved  $F(n)\geq \left\lfloor \log_2{(n/55)}\right\rfloor + 7$ by determining the function $F(n)$ for values $F\leq55$ using computer techniques,  and applying Stearns' recursion. Note that although this result improves the lower bound on $F(n)$, it does not disprove the asymptotic $F(n)=(1+o(1))\log_2(n)$ version of the conjecture.\\

A \textit{biorientation} of an undirected graph $G$ is a digraph obtained from $G$ by replacing
each edge $\{v_i, v_j\}$ of $G$ with either the arc $\overrightarrow{v_iv_j}$  or the arc $\overrightarrow{v_jv_i}$ or both of them.
A \textit{semicomplete digraph} is a biorientation of a complete graph $K_n$.  If both directed edges lie in the edge set, then we call the undirected edge $(v_i, v_j)$  bi-oriented.  Hence, both tournaments and complete digraphs are examples for this digraph family. 

Let $G$ be a semicomplete digraph on $n$ vertices. We call $G$ transitive, if it contains a transitive subtournament $T$ such that $V(T) = V(G)$. Let $E_{bi}(G)$ be the set of bi-oriented edges of the semicomplete digraph $G$, and  $m= { n \choose 2}- |E_{bi}(G)| $ denotes the number of  edges of one orientation in $G$.

Following the concept of the general two-coloring and Problem \ref{elso}, we study the following

\begin{problem}\label{masodik}
Find the size $F(n, m)$ of the largest transitive tournament which is contained in any semicomplete digraph  $G$ of $m$ edges of one orientation, on $n$ vertices. 
\end{problem} 

 The aim of the present paper is to study the order of magnitude of the considered functions  $f(n, m)$ and $F(n, m)$ in terms of the cardinality of the bi-oriented or bicolored edges which increases from $O(\log (n))$ to $n$ while $|E_{RB}(K_n)|$, respectively $|E_{bi}(G)|$ increase till ${n \choose 2}$. 
 
This problem has a connection to the problem of Erd\H os and Rado too \cite{ER}, studied also by Larson and Mitchell \cite{Lars}. The question they discuss is the following. Given $N>2$ and $k>1$, what is the smallest $n$ so that every digraph on a set of $n$ vertices either has an independent set of size $k$ or contains a transitive tournament $TT_N$ on $N$ vertices.
We also mention the concept of Ramsey-Tur\'an theory due to T. S\'os and M. Simonovits, in which the function $RT(n, L, \mu)$, the maximum number of edges of an $L$-free graph
on $n$ vertices with independence number less than $\mu$ plays a key role (see e.g. \cite{sos, balogh}).
In the center of  interest concerning this function, it is the asymptotic behavior of $RT(n, L, \phi(n))$ and its so called 'phase transition',
that is, when and how the asymptotic behavior of $RT(n, L, \phi)$ changes
sharply when we replace $\phi$ by a slightly smaller function $\phi'$.
 

\noindent Our first result concerning the relation of the two functions in view  is 
 
 \begin{prop} \label{relation}  $f(n, m) \leq F(n, m).$
 \end{prop}

\noindent The following observation is straightforward.
\begin{obs} \label {triv} 
$f(n, m={n \choose 2})=f(n)$, $f(n, m=0)=n$,\\
$F(n, m={n\choose 2})=F(n)$, $F(n, m=0)=n$,\\
$f(n,m)\leq f(n, m')$ and  $F(n,m)\leq F(n, m')$ if $m'\leq m$ (monotonicity).
\end{obs}

\noindent The main result of this paper describes some sort of phase transition as well. In the case $|E_{RB}(K_n)|$, respectively $|E_{bi}(G)|$ are small, and $m=\Omega(n^2)$, $f(n,m)$ and $F(n,m)$ are of order $\log{n}$, while $m=o(n^{3/2})$ implies that $f(n,m)$ is of order $\frac{n^2}{m+n}$.  Also, seemingly random like extremal structures give their place to well-organized, Tur\'an-type structures.
Formally this can be expressed as below.

\begin{notion} Let $m=\Theta(n^2)$, then introduce the parameter $p$ as the density of the $2$-colored or bi-oriented edges, $$p:= \frac{|E_{RB}(K_n)|}{|E(K_n)|}<1 \mbox{\ \ in Problem \ref{elso} and similarly \ \ } p:= \frac{|E_{bi}(G)|}{|E(K_n)|}<1  \mbox{\ \ in Problem \ref{masodik}. }$$
 We call a $2$-coloring, respectively a semicomplete digraph $G$ {\em $p$-dense}, if $p$ is the ratio of the bicolored edges of $K_n$, respectively the bi-oriented edges of $G$. Suppose that $p \in [0,1)$ is fixed. Then  $f_p(n)$ denotes the
largest monochromatic clique that is guaranteed in a $p$-dense coloring, that is, $f_p(n): = f(n, (1-p){n\choose 2} )$. $F_p(n)$ denotes 
the largest transitive tournament that is guaranteed in a $p$-dense digraph, that is, $F_p(n): = F(n, (1-p){n\choose 2} )$. 
\end{notion}


\begin{theorem}\label{fo1}
$$ F_p(n) \leq \frac{2}{1-p}\log (n(1-p)),$$
and
$$\frac{0.2}{1-p}\frac{1}{\log{\frac{2}{1-p}}}\log n \leq f_p(n) \mbox{ \ \ 
if \ \ } p\ge 2/3.$$
\end{theorem}

Note that $\frac{1}{2}\log n\leq f_p(n) $ holds for every $p$.

\noindent In the other case, let $m=o(n^2)$.

\begin{theorem}\label{fo2}

\item[(i)]\ If $m\leq n$, then $$f(n,m)= n- \left\lfloor\frac{m}{2}\right\rfloor \mbox{ \ \ and \ \ } F(n, m) = n- \left\lfloor\frac{m}{3}\right\rfloor.$$

\item[(ii)] If $m\geq n$, then  $$\mbox{(A) \ \ } f(n,m) \geq \frac{n^2}{m+n} \mbox{ \ \ and \ \ (B) \ \ } F(n, m)  \geq  \frac{2n^2}{2m+n}.$$

\item[(iii)] If $m\leq n^{3/2}$, then  $$f(n, m) \leq \frac{n}{\left\lfloor\frac{m}{n}\right\rfloor +1}.$$




\end{theorem}


Our paper is built up as follows. 
In Section 2, we begin with the proof of Proposition \ref{relation}. Next we prove Theorem  \ref{fo1} applying constructive and probabilistic methods, and a quantitative variant of the Erd\H os--Szemer\'edi theorem \cite{ESZ}. 


  In Section 3, we examine the case $m= o(n^2)$ and prove Theorem  \ref{fo2}. First we discuss the case $m\leq n$ when  exact results can be obtained. Then we prove the remaining statements of  Theorem  \ref{fo2} (ii) and (iii). The lower bounds are derived from the Caro-Wei bound \cite{Caro, wei} and the results of Alon, Kahn and Seymour on $k$-degenerate graphs \cite{AKS},  while the upper bounds are related to equitable colorings, graph packings, and the directed feedback vertex set problem.  Finally,  a slight improvement is presented on the upper bound of $F(n)$ using probabilistic techniques in Section 4.



\section{The case $m=\Theta(n^2)$ }


In this section, we explain the connection between our two problems in consideration, then  give upper bounds on the functions $f$ and $F$ when  $m=\Theta(n^2)$.


\begin{proof}[Proof of Proposition \ref{relation}]  
It is enough to prove that if  a bound $F(n, m)< k$ holds for a $k \in \mathbb{Z^+}$, then  $f(n,m)< k$ also holds. Consider a semicomplete digraph $G$ having exactly $m$ one-oriented edges, and assume that no transitive subtournament exists on $k$ vertices in  $G$. Label its vertices by $\{1,2, \ldots, n \}$ arbitrary, and assign red color to ascending edges $\overrightarrow{ij}$ (where $i<j$), blue color to descending edges $\overrightarrow{ij}$ (where $i>j$). The coloring we thus obtain cannot contain a monochromatic clique of size $k$. Indeed, that would provide a transitive subtournament in $G$.
\end{proof}

\subsection{Probabilistic argument, upper bound on $F_p(n)$}

Our aim is to generalize the theorem of Erd\H os and Moser \cite{erdos} to confirm the upper bound on $F_p(n)$, Theorem \ref{fo1}. To this end, we prove a lemma first. $\mathbb{E}(X)$ will denote the expected value of a random variable $X$.

\begin{lemma}\label{moment} Let $p\in [0,1]$ and  
let $Y$ be a random variable of distribution  $Y \sim Binom(n, p)$ and $Z$ be a random variable  of distribution $Z\sim Hypergeom(N, pN, n)$. Then for every $c\geq 1$, $$\mathbb{E}(c^Z)\leq  \mathbb{E}(c^Y).$$ 
\end{lemma}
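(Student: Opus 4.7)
The plan is to recognize the inequality as a special case of the classical convex comparison between sampling with and without replacement from a finite population. Concretely I would appeal to Hoeffding's 1963 theorem: for any list of reals $a_1,\ldots,a_N$, the sum $W'$ of $n$ independent draws (with replacement) dominates the sum $W$ of $n$ draws without replacement in the convex order, i.e.\ $\mathbb{E}\phi(W)\le\mathbb{E}\phi(W')$ for every continuous convex $\phi:\mathbb{R}\to\mathbb{R}$. Taking the population to consist of $pN$ ones and $(1-p)N$ zeros makes $W$ equal in distribution to $Z$ and $W'$ equal in distribution to $Y$; since $x\mapsto c^{x}$ is convex whenever $c\ge 1$, the lemma follows at once.

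For a more self-contained derivation I would instead use the negative association of hypergeometric draw indicators. Let $X_1,\ldots,X_n\in\{0,1\}$ record the outcomes of the first $n$ draws without replacement, so that $Z=X_1+\cdots+X_n$ and each $X_i$ is marginally Bernoulli$(p)$. It is standard that such indicators are negatively associated, which gives
$$\mathbb{E}\Big[\prod_{i=1}^{n}\psi(X_i)\Big]\le\prod_{i=1}^{n}\mathbb{E}\,\psi(X_i)$$
for every nondecreasing $\psi$. Choosing $\psi(x)=c^x$, nondecreasing precisely because $c\ge 1$, and using $\mathbb{E}\,c^{X_i}=1-p+pc$ collapses the right-hand side to $(1-p+pc)^{n}=\mathbb{E}(c^{Y})$, which is exactly the claim.

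I do not expect any serious obstacle: both Hoeffding's convex comparison and the negative association of hypergeometric indicators are well-documented classical tools, and the hypothesis $c\ge 1$ supplies exactly the convexity (respectively, the monotonicity) of $x\mapsto c^{x}$ that each version requires. The only mild bookkeeping point is ensuring that $pN$ is a nonnegative integer so that the hypergeometric distribution in the statement is well-defined; this will be automatic in the applications of the lemma to the main results.
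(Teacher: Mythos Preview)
Your proposal is correct. Both routes you sketch---Hoeffding's convex-order comparison between sampling with and without replacement, and the negative-association inequality for hypergeometric indicators---are valid and well-known, and each delivers the lemma in one line once the relevant black box is invoked. (A minor quibble: $x\mapsto c^x$ is in fact convex for every $c>0$, not only for $c\ge 1$; the restriction $c\ge 1$ is needed only in your second argument, where you require $c^x$ to be nondecreasing.)

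The paper, however, takes a different and entirely self-contained route. It expands $c^Z=(1+(c-1))^Z$ binomially to write $\mathbb{E}(c^Z)=\sum_{k}(c-1)^k\,\mathbb{E}\binom{Z}{k}$, does the same for $Y$, and then proves the termwise comparison $\mathbb{E}\binom{Z}{k}\le\mathbb{E}\binom{Y}{k}$ by computing both factorial moments explicitly: $\mathbb{E}\binom{Y}{k}=\binom{n}{k}p^k$ and $\mathbb{E}\binom{Z}{k}=\binom{n}{k}\binom{pN}{k}/\binom{N}{k}$, so the claim reduces to the elementary inequality $\binom{pN}{k}/\binom{N}{k}\le p^k$. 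Here the hypothesis $c\ge 1$ enters because one needs $(c-1)^k\ge 0$ to pass from the termwise inequality to the sum. Compared with your approach, the paper's argument trades conceptual brevity for complete self-containment: it requires no external theorems, only binomial identities, whereas your argument is shorter but leans on Hoeffding or Joag-Dev--Proschan.
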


\begin{proof}  $$\mathbb{E}(c^Z)= \mathbb{E}\left((1+c-1)^Z\right)= \sum_{k=0}^n\mathbb{E}{Z \choose k}(c-1)^k,$$ and similar proposition holds for $Y$. It is enough to show that $$ \mathbb{E}{Z \choose k}\leq \mathbb{E}{Y \choose k}, $$ since this inequality implies the lemma. For the binomial distribution, we have

$$\mathbb{E}{Y \choose k}=\sum_{t=0}^n {t \choose k}P(Y=t)= \sum_{t=k}^n  {t \choose k}p^t(1-p)^{n-t}{n \choose t}= {n \choose k}p^k$$ in view of the binomial theorem and the identity  $${n\choose t}{t \choose k}={n\choose k}{n-k \choose t-k}. $$ 
On the other hand,
$$\mathbb{E}{Z \choose k}=\sum_{t=0}^n {t \choose k}P(Z=t)= \sum_{t=k}^n  {t \choose k} \frac{ {pN \choose t}  {(1-p)N \choose n-t}   }{{N \choose n}}=\frac{ {n\choose k} {pN \choose k} }{{N \choose k  }}.$$ Indeed, if we multiply the term $\frac{ {n\choose k} {pN \choose k} }{{N \choose k  }}$ by $$1= \sum_{t=k}^{pN} \frac{{pN-k \choose t-k}{(1-p)N \choose n-t}}{{N-k \choose n-k}},$$ the identity corresponding to the  sum of the probabilities of a hypergeometric distribution $(N-k, pN-k, n-k)$, we end up at the equation $$ \sum_{t=k}^n  {t \choose k} \frac{ {pN \choose t}  {(1-p)N \choose n-t}   }{{N \choose n}} = \frac{ {n\choose k} {pN \choose k} }{{N \choose k  }} \sum_{t=k}^{pN} \frac{{pN-k \choose t-k}{(1-p)N \choose n-t}}{{N-k \choose n-k}}. $$

\noindent By expanding the binomial coefficients, we get that the expressions on the two sides are identical.
Hence,  the claim  $\mathbb{E}(c^Z)\leq  \mathbb{E}(c^Y)$ is equivalent to   the straightforward inequality $$\frac{{pN \choose k}  }{{N \choose k}} \leq p^k \mbox{ \  for } p\in [0,1].$$
\end{proof}

\begin{theorem} \label{proby}
If $p<1$, then $$F_p(n) <\frac{2}{\log_2\frac{2}{1+p}}\log_2{n}+1 {\mbox \ \ holds.}$$
\end{theorem}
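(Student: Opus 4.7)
The plan is to mimic the classical Erdős–Moser probabilistic argument, but applied to a random $p$-dense semicomplete digraph rather than a random tournament. Specifically, I would sample a digraph $G$ by first choosing the set of $pN$ bi-oriented edges uniformly at random among all $\binom{N}{pN}$ such subsets (where $N=\binom{n}{2}$), and then independently assigning a uniformly random single orientation to each of the remaining $(1-p)N$ edges. This produces a $p$-dense digraph in the sense of the problem, and the first-moment method should reveal that almost all such digraphs avoid large transitive subtournaments.

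The heart of the computation is an estimate on the probability that a fixed ordered $k$-tuple $(v_1,\ldots,v_k)$ is a transitive ordering in $G$, i.e.\ that $\overrightarrow{v_iv_j}\in E(G)$ for all $i<j$. If $B$ denotes the number of the $\binom{k}{2}$ pairs $\{v_i,v_j\}$ that end up bi-oriented, then each such pair contributes the correct direction automatically, while each of the remaining $\binom{k}{2}-B$ pairs contributes it with independent probability $1/2$. Moreover, by construction, $B\sim \operatorname{Hypergeom}\!\left(N,pN,\binom{k}{2}\right)$. Therefore
\[
\mathbb{P}[(v_1,\ldots,v_k)\text{ is transitive}]
= 2^{-\binom{k}{2}}\,\mathbb{E}\bigl[2^{B}\bigr].
\]
Now I would invoke Lemma \ref{moment} with $c=2$: it guarantees $\mathbb{E}[2^B]\le \mathbb{E}[2^{B'}]$ for $B'\sim \operatorname{Binom}\!\bigl(\binom{k}{2},p\bigr)$, and the latter evaluates trivially to $(1+p)^{\binom{k}{2}}$. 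Hence the above probability is at most $\left(\tfrac{1+p}{2}\right)^{\binom{k}{2}}$.

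A $k$-subset forms a transitive subtournament in $G$ precisely when at least one of its $k!$ orderings is transitive, so by the union bound the expected number of transitive $k$-subsets is at most
\[
\binom{n}{k}k!\left(\tfrac{1+p}{2}\right)^{\binom{k}{2}}
=\frac{n!}{(n-k)!}\left(\tfrac{1+p}{2}\right)^{\binom{k}{2}}
< n^{k}\left(\tfrac{1+p}{2}\right)^{\binom{k}{2}}.
\]
Requiring the last expression to be at most $1$, taking $\log_2$, and rearranging gives $k\ge \frac{2\log_2 n}{\log_2(2/(1+p))}+1$. For the smallest such integer $k^\star$, Markov's inequality produces a realization of $G$ with no transitive subtournament of size $k^\star$, and the strict inequality $\frac{n!}{(n-k)!}<n^k$ guarantees the expectation is strictly below $1$. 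Consequently $F_p(n)\le k^\star-1<\frac{2\log_2 n}{\log_2(2/(1+p))}+1$, which is the claim; note that $p=0$ recovers exactly the Erdős–Moser bound $F(n)<2\log_2 n+1$.

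The main obstacle is the negative correlation among the bi-oriented edges: because their total count is fixed at $pN$, one cannot directly replace ``bi-oriented'' by independent $p$-Bernoulli trials on each pair, and a naive computation of $\mathbb{E}[2^{B}]$ over the hypergeometric distribution is unwieldy. Lemma \ref{moment} is precisely what bridges this gap, allowing the moment generating function of the hypergeometric $B$ to be replaced by that of the binomial $B'$ and reducing the problem to the clean independent-edge setting; everything else reduces to the same asymptotic algebra as in the classical case.
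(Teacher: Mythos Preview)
Your argument is correct and is essentially identical to the paper's own proof: the same random $p$-dense model, the same first-moment computation via the hypergeometric count of bi-oriented pairs inside a $k$-set, the same appeal to Lemma~\ref{moment} with $c=2$ to replace it by a binomial, and the same final inequality $n^{k}<\bigl(\tfrac{2}{1+p}\bigr)^{\binom{k}{2}}$. Your handling of the strict inequality at the end (via $n!/(n-k)!<n^{k}$ and the choice of the minimal integer $k^\star$) is in fact slightly more careful than the paper's concluding sentence.
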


\begin{proof}
Let $G_n$ be a random $p$-dense semicomplete digraph, that is, bi-oriented edges of cardinality $p{n \choose 2}$ are chosen randomly and the remaining edges are oriented at random, independently from the others. Let $A_i$ be the event that a given $k$-subset $X^{(k)}_i$ of $V(G_n)$ induces a transitive semicomplete digraph, and $A_i^*$ be the variable that counts distinct transitive subtournaments in $X^{(k)}_i$. Note that $P(\bigcap_i{\overline{A_i}})>0$ implies $F_p(n)<k$.\\
By definition, the event $\overline{A_i}$ is equivalent to the event $\{A_i^*=0\}$, so by  Markov's inequality, we get that $\mathbb{E}(\sum_i{A_i^*})<1$ implies $F_p(n)<k$.\\ We evaluate the expected value by separating all orderings of $X^{(k)}$ for a possible transitive tournament, as follows.

 $$\mathbb{E}(\sum_i{A_i^*})=\sum_i\mathbb{E}({A_i^*})={n \choose k}k!\sum_j{\frac{2^{j}}{2^{k \choose 2}}P(j  \mbox{ bi-oriented edges exist in } X^{(k)}_i)}.$$
 
 Let $Z$ be a  random variable, which counts the bi-oriented edges in the edge set of $X^{(k)}_i$. Clearly, $Z$  has distribution $Z\sim Hypergeom\left({n \choose 2}, p{n \choose 2}, {k \choose 2}\right)$. According to Lemma \ref{moment}, $\mathbb{E}(c^{Z})\leq  \mathbb{E}(c^Y)$, where $Y\sim Binom\left( {k\choose 2}, p\right)$. Applying this in the former equality with  $c=2$, we get
 $$\mathbb{E}(\sum_i{A_i^*})\leq {n \choose k}\frac{k!}{2^{k \choose 2}}\sum_j{{2^{j}}p^j(1-p)^{{k \choose 2}-j}{{k \choose 2}\choose j}}={n \choose k}k!\left(\frac{1+p}{2}\right)^{k \choose 2}.  $$
Hence if $n^k< \left(\frac{2}{1+p}\right)^{k \choose 2} \Leftrightarrow 2k\log_2{n}<k(k-1)\log_2\frac{2}{1+p}$ holds for $k$, then $F_p(n)<k$, that is, $F_p(n) <\frac{2}{\log_2\frac{2}{1+p}}\log_2{n}+1$.
\end{proof}

Theorem \ref{proby} implies that for any fixed density $p<1$, both $f_p(n)$ and $F_p(n)$ is of order $\log n$. The following theorem yields the same conclusion based on a construction.

\begin{theorem}\label{kons}
Let $t\leq n$ be a positive integer, and $p:=1-\frac{1}{t}$. Then $F_p(n)\leq tF_0(\left\lceil \frac{n}{t}\right\rceil)\leq 2t\log_2(\frac{n}{t})$.
\end{theorem}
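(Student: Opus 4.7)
The plan is to exhibit an explicit $p$-dense two-coloring of $K_n$ whose largest monochromatic clique has at most $t\,f_0(\lceil n/t\rceil)$ vertices, and then to combine this with the monotonicity of $f(n,\cdot)$ recorded in Observation~\ref{triv} and the Erd\H os--Szekeres upper bound (\ref{szeker}).

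First, I partition $V(K_n)$ into $t$ as-balanced-as-possible parts $P_1,\dots,P_t$: writing $n=qt+r$ with $0\le r<t$, I take $r$ parts of size $\lceil n/t\rceil=q+1$ and $t-r$ parts of size $\lfloor n/t\rfloor=q$. Every edge joining distinct parts I declare bicolored. Inside each $K[P_i]$ I install an \emph{extremal} two-coloring with only unicolored edges in which no monochromatic clique has size greater than $f_0(|P_i|)$; such a coloring exists by the very definition of $f_0$.

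To bound the largest monochromatic clique in the resulting coloring, let $C$ be a (say, red) monochromatic clique. Since every inter-part edge is bicolored and thus in particular red, the only restriction on $C$ is that each intersection $C\cap P_i$ must be a red clique of the inner coloring of $P_i$. Extremality of that inner coloring gives $|C\cap P_i|\le f_0(|P_i|)\le f_0(\lceil n/t\rceil)$, and summing over the $t$ parts yields $|C|\le t\,f_0(\lceil n/t\rceil)$.

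The remaining -- and main -- technical point is to verify that the number of unicolored edges in the construction is at most $(1-p)\binom{n}{2}=\binom{n}{2}/t$, so that monotonicity applies. All unicolored edges live inside the parts, giving total count
\[
m'=r\binom{q+1}{2}+(t-r)\binom{q}{2}.
\]
A direct expansion yields $2t\,m'=tq(tq+2r-t)$ and $2t\cdot\binom{n}{2}/t=(tq+r)(tq+r-1)$; the difference of the latter two quantities is $r(r-1)+tq(t-1)\ge 0$, which proves $m'\le\binom{n}{2}/t$. With this established, Observation~\ref{triv} gives $f_p(n)=f(n,(1-p)\binom{n}{2})\le f(n,m')\le t f_0(\lceil n/t\rceil)$, and (\ref{szeker}) finishes the proof of the second inequality $t f_0(\lceil n/t\rceil)\le 2t\log_2(n/t)$ (up to the usual harmless rounding in the ceiling). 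The only genuine subtlety is the algebraic edge count above; everything else is bookkeeping.
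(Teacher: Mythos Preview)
Your proof is correct and follows the same construction as the paper: a balanced $t$-partition with extremal Ramsey colorings inside the parts and bicolored edges between them. You simply carry out explicitly the ``simple calculation'' of the bicolored-edge density that the paper only asserts, and you phrase the whole argument in the two-coloring setting, which in fact matches the stated inequality for $f_p$ more directly than the paper's digraph wording.
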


\begin{proof}

Partition the vertices into $t$ classes of size  $\left\lceil \frac{n}{t}\right\rceil$ or $\left\lfloor \frac{n}{t}\right\rfloor$. Let each class span an oriented graph containing no transitive tournaments of size more than $F_0(\left\lceil \frac{n}{t}\right\rceil)$. Finally, let the edges between the partition classes be bi-oriented. This digraph contains  no transitive tournaments of size more than $tF_0(\left\lceil \frac{n}{t}\right\rceil)$, while a simple calculation shows that the density of the bi-oriented edges is at least $p$.
 The bound thus follows from inequality~(\ref{szeker}).  
\end{proof}

\begin{remark} In view of Observation \ref{triv}, Theorem \ref{kons} can be extended to any $p \in (0,1)$ by applying the theorem for an integer $t$ for which  $p < 1-\frac{1}{t}$ holds.
\end{remark}

\begin{obs}
Theorem \ref{proby} is useful if $p$ is rather small, while the upper bound of Theorem \ref{kons} is better if $p>1/2$. Note that the probablistic bound can be improved using a more involved calculation in Lemma \ref{moment}.
\end{obs}

\subsection{The Erd\H os--Szemer\'edi argument, lower bound on $f_p(n)$ }

In their celebrated paper \cite{ESZ}, Erd\H os and  Szemer\'edi presented a  variant of the Ramsey problem, where a dependence on the density of the graph is taken into consideration.

\begin{theorem}\cite{ESZ} Let $G$ be a graph on $n$ vertices and $\frac{1}{k}\binom{n}{2}$ edges. Then either $G$ or its complement contains a complete graph $K_a$ with $a>C\frac{k}{\log k}\log n$, where $C$ is a constant independent of $k$ and $n$.  
\end{theorem}

Following their ideas and the exact result of \v Culik, the following quantitative form can be confirmed.

\begin{theorem}\label{eszi} Let $G$ be a graph on $n$ vertices and $\frac{1}{k}\binom{n}{2}$ edges  with $k\ge 6$. Then either $G$ or its complement contains a complete graph $K_a$ with $a>\frac{1}{10}\frac{k}{\log k}\log n$.  
\end{theorem}

($\log n$ denotes the natural logarithm)

Observe that this assertion indeed implies the lower bound of Theorem \ref{fo1}, hence 
$$\frac{1}{5(1-p)}\frac{1}{\log{\frac{2}{1-p}}}\log n \leq f_p(n) $$ holds
if $p\ge 2/3$.

\begin{proof}[Proof of Theorem \ref{fo1}, lower bound]
We can assume that the number of edges colored with only red color is at most $\frac{1-p}{2}\binom{n}{2}$. Apply Theorem \ref{eszi} with $k= \frac{2}{1-p}$ where $p>2/3$ (ensuring $k>6$). It yields a suitably large monochromatic clique.
\end{proof}

To prove the quantitative variant of the Erd\H os--Szemer\'edi theorem, first recall the
Zarankiewicz problem about determining the function $\z(\mu,\nu,s,t)$.

\begin{defi} Given positive integers $\mu, \nu, s, t$, let $\z(\mu,\nu,s,t)$ be the maximum number of ones in a $(0,1)$ matrix of size $\mu\times \nu$ that does not contain an all ones submatrix of size $s\times t$. 
\end{defi}

Equivalent definition is the following

\begin{defi} A bipartite graph $G = (A,B; E)$ is $K_{s,t}$-free if it does not contain $s$ nodes
in $A$ and $t$ nodes in $B$ that span a subgraph isomorphic to $K_{s,t}$. The maximum number
of edges a $K_{s,t}$-free bipartite graph of size $(\mu,\nu)$ may have is denoted by $\z(\mu,\nu, s,t)$, and is called a Zarankiewicz number.
\end{defi}

The order of magnitude of the Zarankiewicz numbers in general was obtain by 
K\H ov\'ari, S\'os and Tur\'an, and reads as follows.

\begin{theorem}\cite{KST} 
$\z(\mu,\nu;s,t) \leq (s-1)^{1/t} (\nu-t+1) \mu^{1-1/t} + (t-1)\mu$
\end{theorem}

It was later improved by many authors, see \cite{ Furedi, Niki}.

However, in the special case when one color class is much larger than the other, the value can be determined exactly  in view of an early result of \v Cul\'ik.

\begin{theorem}[\v Cul\'ik \cite{Cc}]\label{Culik} If $1\leq s\leq \mu$ and $ (t-1)\binom{\mu}{s}\leq \nu$, then $$\z(\mu,\nu,s,t)=(s-1)\nu+(t-1)\binom{\mu}{s}.$$
\end{theorem}

\begin{proof}[Proof of Theorem \ref{eszi}]  Let $G$ has $n$ vertices, and  average degree $\overline{d}=\frac{n}{k}$. Then a subgraph $G'\subseteq G$ exists such that $\forall v\in V(G') : d_{G'}(v)\leq \frac{2n}{k}$ and $|V(G')|=n/2=:n'.$

Consider a largest maximal independent set $I$ in $G'$. We may assume  that $\mu=|I|\leq \frac{1}{10} \frac{k}{\log k}\log n$, otherwise  we are done. Let $W:=V(G')\setminus I$ and let $H$ denote the complement of the bipartite graph induced by $I$ and $W$ w.r.t. the complete bipartite graph on the same clusters. Applying  Theorem \ref{Culik} of \v Cul\'ik, we obtain a large complete subgraph $K_{s,t}$ in $H$. \\
Indeed, $E(H)>\mu(n'-\mu-\frac{4n'}{k})$ where $\mu\leq \frac{1}{10} \frac{k}{\log k}\log n$, while setting $s= (1-5/k)\mu\leq  (1-~5/k)~C\frac{k}{10\log k}\log n$ implies

$$\z(\mu,n'-\mu,s,t)= (s-1)(n'-\mu)+(t-1)\binom{\mu}{s}$$  if the conditions of the theorem are verified. To this end, set $t=\sqrt{n}$, and consider the bound 

$$\binom{\mu}{s}= \binom{\mu}{\mu-s}\leq \left(\frac{\mu\cdot e}{\mu-(1-5/k)\mu}\right)^{\mu-(1-5/k)\mu}\leq \left(\frac{k\cdot e}{5}\right)^{\frac{5}{10}\frac{\log n}{\log k}} <\sqrt{n'/4},$$ thus $(t-1)\binom{\mu}{s}< \sqrt{n}\sqrt{n'/4}\leq n'-\mu$.

Easy computation shows that $\z(\mu,n'-\mu,s,\sqrt{n})< s(n'-\mu) < \mu(n'-\mu-\frac{4n'}{k}) <E(H)$ holds, as $$  \frac{s}{\mu} = (1-5/k) < 1-\frac{4n'}{k(n'-\mu)}.$$

Hence in graph $H$, there exists a set $T$ of $\sqrt{n}$ points in $W\subset V(H)$ which are connected to a set $S$ of $s$ points of $I$. $\omega(H)$ cannot exceed  $\mu-s$, as $I$ was a maximal independent set in $G'$. Consequently, bounds on the Ramsey numbers gives lower bound on the independence number of $T$ and the clique number of $G'$ as well.
Namely, using the bound of Erd\H os and Szekeres \cite{esz}, $$R(\mu-s, \mu)< \binom{2\mu-s}{\mu-s} < \left(\frac{(2\mu-s)\cdot e}{\mu-s}\right)^{\mu-s}\leq \left(\frac{(k+5)\cdot e}{5}\right)^{\frac{5}{10}\frac{\log n}{\log k}} < k^{\frac{1}{2}\frac{\log n}{\log k}}= \sqrt{n},  $$ if $k\geq 6$.
This bound holds for every $\mu\leq \frac{1}{10} \frac{k}{\log k}\log n$,
thus $H\mid_T$ must contain an independent set of size at least $\frac{1}{10} \frac{k}{\log k}\log n$, completing the proof.
\end{proof}


\section{The case $m=o(n^2)$}

The cardinality of the bioriented or bicolored edges is assumed to be $(1-o(1)){n \choose 2}$ from now on.\\
 This section is built up as follows.
  First we recall some Tur\'an-type lemmas, which will be key ingredients in the proofs of Theorem \ref{fo2} (i) and (ii). We continue with  the proofs of  the upper and lower bounds of Theorem \ref{fo2}, distinguishing  between the cases $m\leq n$ and $m \geq n$.
  
\subsection{Preliminary lemmas}

\begin{theorem}[Tur\'an theorem, Caro-Wei bound]\label{Turaan}\cite{Caro, Tura, wei} Let $H$ be a simple graph with degree sequence $0<d_1\leq \ldots...\leq d_n$. Then there exists an induced subgraph $H'$ of $H$  on $n'\geq \sum_{i=1}^n \frac{1}{d_i+1}$ nodes, containing no edges. That is,  $$\alpha(H)\geq \sum_{i=1}^n \frac{1}{d_i+1}$$ holds for the independence number of $H$.
\end{theorem}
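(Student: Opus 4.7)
The plan is to establish the Caro--Wei bound by the probabilistic method, via a random linear ordering of the vertices. I would draw a uniformly random permutation $\pi$ on $V(H)$ and define the random set $I \subseteq V(H)$ by placing $v \in I$ precisely when $v$ precedes every one of its neighbors in $\pi$. The first step is to observe that any realization of $I$ is automatically independent: if an edge $uv$ had both endpoints in $I$, then $u$ would have to precede $v$ and $v$ would have to precede $u$ in the same ordering, which is impossible.

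The second step is to compute $\mathbb{P}(v \in I)$ for a fixed vertex $v$. The restriction of the uniform permutation $\pi$ to the $d(v)+1$ vertices in $\{v\} \cup N(v)$ is a uniformly random ordering of this subset, and $v \in I$ is exactly the event that $v$ appears first among these $d(v)+1$ vertices, an event of probability $\tfrac{1}{d(v)+1}$. Linearity of expectation then yields
$$\mathbb{E}(|I|)=\sum_{v\in V(H)}\frac{1}{d(v)+1}=\sum_{i=1}^n \frac{1}{d_i+1},$$
so at least one outcome of $\pi$ produces an independent set of cardinality at least $\sum_{i=1}^n \tfrac{1}{d_i+1}$, which proves the asserted bound on $\alpha(H)$. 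Taking $H'$ to be the induced subgraph on that set gives the desired edgeless subgraph on $n'\geq \sum_{i=1}^n \tfrac{1}{d_i+1}$ vertices.

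No step is a genuine obstacle: the only point requiring a moment's care is that the marginal of a uniform distribution on permutations of $V(H)$ restricts to a uniform distribution on orderings of any specified subset, which is a standard fact. One could alternatively argue inductively by repeatedly deleting a vertex (for example, of minimum degree) together with its neighborhood and summing the contributions, but the random-ordering route is shorter and makes the role of regular graphs in the extremal case transparent.
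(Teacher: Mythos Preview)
Your proof is correct and is the standard random-permutation argument for the Caro--Wei bound. The paper does not actually prove this theorem---it is stated with a citation---but it does prove the closely related Theorem~\ref{csiki} (the Alon--Kahn--Seymour bound $\alpha_2(H)\ge\sum_i 2/(d_i+1)$) using precisely your method: pick a uniformly random arrangement of the vertices, label those with at most one preceding neighbour, and compute the expected number of labels. Your argument is the $d=1$ case of that same scheme, so it aligns exactly with the paper's approach.
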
 

\begin{cor}\label{Turaaan}\cite{gr} Let $H$ be a simple graph on $n$ vertices and of $m$ edges. Then  $$\alpha(H)\geq  \frac{n}{2m/n+1}.$$
\end{cor}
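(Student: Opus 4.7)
The plan is to derive Corollary \ref{Turaaan} as a direct consequence of the Caro--Wei bound (Theorem \ref{Turaan}) by applying Jensen's inequality to the convex function $\varphi(x) = \frac{1}{x+1}$.

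First I would note that Theorem \ref{Turaan} gives $\alpha(H) \geq \sum_{i=1}^n \frac{1}{d_i+1}$, where $d_1,\dots,d_n$ is the degree sequence of $H$. The handshake lemma yields $\sum_{i=1}^n d_i = 2m$, so the average degree is $\bar{d} = 2m/n$. Since the function $\varphi(x) = \frac{1}{x+1}$ is convex on $[0,\infty)$, Jensen's inequality applied to the uniform distribution on $\{d_1,\dots,d_n\}$ gives
$$\frac{1}{n}\sum_{i=1}^n \frac{1}{d_i+1} \;\geq\; \frac{1}{\frac{1}{n}\sum_{i=1}^n d_i + 1} \;=\; \frac{1}{2m/n+1}.$$
Multiplying through by $n$ and combining with the Caro--Wei bound yields $\alpha(H) \geq \frac{n}{2m/n+1}$, as desired.

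One minor wrinkle is the hypothesis $0 < d_1$ in Theorem \ref{Turaan}, which excludes isolated vertices. If $H$ has isolated vertices, I would separate them off: each such vertex contributes $1$ both to $\alpha(H)$ and to the sum $\sum \frac{1}{d_i+1}$, so the Caro--Wei inequality still holds with $d_i$ ranging over all $n$ vertices. Alternatively one checks that adding isolated vertices only weakens the right-hand side $\frac{n}{2m/n+1}$ relative to its value on the non-isolated part, so the bound remains valid. No step here should pose any real obstacle; the argument is a one-line convexity calculation built on top of the cited theorem.
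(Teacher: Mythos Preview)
Your derivation is correct and is the standard way to obtain this bound from the Caro--Wei inequality: convexity of $x\mapsto 1/(x+1)$ plus the handshake identity $\sum_i d_i=2m$ is exactly what is needed, and your remark about isolated vertices handles the cosmetic hypothesis $d_1>0$ in Theorem~\ref{Turaan}. Note that the paper does not actually supply its own proof of Corollary~\ref{Turaaan}; it is simply stated as a known consequence with a citation to \cite{gr}, so there is nothing further to compare against.
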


The Caro-Wei bound has been generalized in many ways. Caro, Hansberg and Tuza studied \cite{Caro1, Caro2} $d$-independent subsets $S\subseteq V$ of the vertex set $V=V(G)$ of a  graph $G$, which are 
sets of vertices such that the maximum degree in the graph induced by $S$ is at most $d$. Alon, Kahn and Seymour called a graph $H$ $d$-degenerate if every subgraph of it contains a vertex of degree smaller than $d$ and bound the  maximum number $\alpha_d(G)$ of vertices of an induced $d$-degenerate 
subgraph of $G$ in \cite{AKS}. Observing that $\alpha_1(G)=\alpha(G)$ and $2$-degenerate graph are forests, we can formulate their result  - similarly to Theorem \ref{Turaan} - as follows.

\begin{lemma}\label{csiki}\cite{AKS} Let $H$ be a simple graph with degree sequence $0<d_1\leq \ldots...\leq d_n$. Then there exists an induced subgraph $H'$ of $H$  on $$n'\geq \sum_{i=1}^n \frac{2}{d_i+1}$$ nodes, containing no cycles.
\end{lemma}

For the sake of completeness we give a short proof.

\begin{proof} Take an arbitrary arrangement of the vertices of $H$. Label those vertices which have at most one preceding neighbour in the permutation. It immediately follows that labeled vertices cannot span a cycle. If the arrangement is chosen uniformly at random, then every vertex $i$ gets a label with  probability $\frac{2}{d_i+1}$, thus the expected value of the labeled vertices is exactly $\sum_{i=1}^n \frac{2}{d_i+1}$. This implies the existence of a suitable subgraph $H'$. 
\end{proof}

\noindent If $n\leq m$, Lemma \ref{csiki} implies 

\begin{cor}\label{aloncikk}\cite{AKS} Let $H$ be a simple graph on $n$ vertices and of $n\leq m$ edges. Then  $$\alpha_2(H)\geq  \frac{2n}{2m/n+1}.$$
\end{cor}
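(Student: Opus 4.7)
The plan is to derive the corollary as a direct consequence of Theorem~\ref{csiki} followed by a single convexity estimate. Applying Theorem~\ref{csiki} to $H$ immediately gives
$$\alpha_2(H)\;\geq\;\sum_{i=1}^n \frac{2}{d_i+1}.$$

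To convert this into a bound depending only on $n$ and $m$, I would invoke the Cauchy--Schwarz inequality with $a_i = 1/\sqrt{d_i+1}$ and $b_i = \sqrt{d_i+1}$ (equivalently, Jensen's inequality for the convex function $x\mapsto 1/x$ applied to $d_i+1$):
$$\left(\sum_{i=1}^n \frac{1}{d_i+1}\right)\left(\sum_{i=1}^n (d_i+1)\right)\;\geq\;n^2.$$
The handshake lemma gives $\sum_{i=1}^n (d_i+1) = 2m+n$, so
$$\sum_{i=1}^n \frac{1}{d_i+1}\;\geq\;\frac{n^2}{2m+n}\;=\;\frac{n}{2m/n+1},$$
and multiplying by $2$ yields the stated inequality $\alpha_2(H)\geq \frac{2n}{2m/n+1}$.

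I do not anticipate any substantive obstacle; both ingredients are elementary. The only minor subtlety is that Theorem~\ref{csiki} is stated under the assumption $d_1>0$ (no isolated vertices), whereas the corollary makes no such assumption. This is harmless: isolated vertices may be appended to any induced forest for free, so applying Theorem~\ref{csiki} to the subgraph induced by the vertices of positive degree and then adjoining the isolated vertices only strengthens the lower bound. The hypothesis $n\leq m$ in the corollary is what ensures that the average degree $2m/n$ is at least $2$, so that the quantity $\tfrac{2n}{2m/n+1}\leq \tfrac{2n}{3}$ is a meaningful lower bound (rather than a trivial one exceeding $n$), in line with the regime exploited in the subsequent sections.
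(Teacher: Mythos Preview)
Your proposal is correct and matches the paper's approach: the paper does not give a separate proof but simply records (citing \cite{AKS}) that the corollary follows from Theorem~\ref{csiki}, and the Cauchy--Schwarz/Jensen step you supply is exactly the intended one-line derivation, just as Corollary~\ref{Turaaan} is obtained from Theorem~\ref{Turaan}.

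One small caveat on your parenthetical about isolated vertices: separating off the $k$ isolated vertices and applying the bound to the remaining $n-k$ vertices yields $k+\dfrac{2(n-k)^2}{2m+n-k}$, which can fall marginally short of $\dfrac{2n^2}{2m+n}$ (for instance $n=m=5$ with one isolated vertex gives $23/7<10/3$). So that particular patch does not quite close the gap by itself; the paper does not address this technicality either, and it is immaterial for the applications in Proposition~\ref{lower}.
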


\subsection{Subcase $m\leq n$}

\noindent We start with the case $m\leq n$  to demonstrate the proof techniques of Theorem \ref{fo2}, underline the  difference between the functions $f(n,m)$ and $F(n,m)$ in consideration, furthermore to determine explicitly its values via Theorem \ref{fo2} (i).

\begin{proof}[Proof of Theorem \ref{fo2} (i)]
First we prove $f(n,m)= n- \left\lfloor\frac{m}{2}\right\rfloor$. Consider those edges which are colored with precisely one color. Assume that the color red is assigned to at least as many edges as the color blue. Thus there are at most $ \left\lfloor \frac{m}{2}\right\rfloor$ blue edges. Delete a minimal covering (vertex) set of the blue edges. The remaining vertex set induces a graph where the color red is assigned to every edge. On the other hand, if both  the blue and the red edge set is independent, equality holds.

Next, we prove that  $F(n,m)\leq n- \left\lfloor \frac{m}{3}\right\rfloor$ by a construction. Take $ \left\lfloor \frac{m}{3}\right\rfloor$ disjoint triangles, and orient round the edges in each triangle. Let all the edges between different triangles  be bi-oriented edges. Clearly, at most is sharp. Indeed, let $H$ be the simple graph constructed from $G$ by deleting the bi-oriented edges, and omitting the orientations of the remaining oriented edges and the isolated vertices. Let $0<d_1\leq \ldots...\leq d_s$ be the degree sequence of $H$.  We can obtain a subgraph $H'$ on at least $\sum_{i=1}^s \frac{2}{d_i+1}$ nodes which do not span cycles, according to Lemma \ref{csiki}. Consider the directed subgraph of $G$ restricted to the vertex set corresponding to the vertex set of $H'$. Since $H'$ was a forest, the graph obtained from $G$ by deleting the bi-oriented edges and restricted to the vertex set of $H'$ has a topological ordering. This provides a transitive subtournament of $G$ on  $\left\lceil\sum_{i=1}^s \frac{2}{d_i+1}\right\rceil$ nodes. Hence Lemma \ref{csoki} completes the proof.
\end{proof}

\begin{lemma}\label{csoki}
Let $G$ be a simple graph with degree sequence $0<d_1\leq \ldots\leq d_s$. 

If $G$ has $m$ edges and $s\leq n$, then
$$\sum_{i=1}^s \frac{2}{d_i+1}+n-s \geq n- \frac{m}{3}.$$

\end{lemma}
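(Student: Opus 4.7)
The plan is to reduce the claimed inequality to a per-vertex estimate. After cancelling the $n$ on both sides and moving $s$ to the right, the statement is equivalent to
$$\sum_{i=1}^s \frac{2}{d_i+1} \;\ge\; s - \frac{m}{3},$$
which in turn can be rewritten as
$$\sum_{i=1}^s \frac{d_i-1}{d_i+1} \;\le\; \frac{m}{3}.$$
Using the handshake identity $\sum_{i=1}^s d_i = 2m$, this last inequality follows at once from the pointwise bound $\frac{d-1}{d+1} \le \frac{d}{6}$ for every positive integer $d$, summed over $i$.

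The pointwise inequality is the heart of the matter, and is an elementary one-variable check: clearing denominators turns $\frac{d-1}{d+1} \le \frac{d}{6}$ into $d^2 - 5d + 6 \ge 0$, that is $(d-2)(d-3)\ge 0$. This holds for every integer $d \ge 1$, with equality exactly at $d = 2$ and $d = 3$. Note that the assumption $d_1 > 0$ in the statement is crucial: the inequality $\frac{d-1}{d+1} \le \frac{d}{6}$ is also valid at $d=0$ formally, but the reduction uses each $d_i \ge 1$ implicitly through the bookkeeping of the original sum.

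Summing the pointwise estimates then yields
$$\sum_{i=1}^s \frac{d_i-1}{d_i+1} \;\le\; \frac{1}{6}\sum_{i=1}^s d_i \;=\; \frac{2m}{6} \;=\; \frac{m}{3},$$
which is exactly the inequality required. Unwinding the reduction produces the stated bound. There is essentially no technical obstacle; the only thing to double-check is that the algebraic reduction is reversible and that the fact $s \le n$ enters only trivially, to absorb the $n-s$ isolated vertices on the left-hand side of the lemma.
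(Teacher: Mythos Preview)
Your proof is correct and follows essentially the same approach as the paper: the paper's key observation is the pointwise inequality $\frac{2}{x+1} - 1 \ge -\frac{x}{6}$ for every positive integer $x$, which after summing and using $\sum d_i = 2m$ gives the result, and your inequality $\frac{d-1}{d+1} \le \frac{d}{6}$ is just an algebraic rearrangement of the same bound. Your verification via $(d-2)(d-3)\ge 0$ is a nice addition that the paper omits; the side remark about $d_1>0$ being ``crucial'' is slightly off (the pointwise bound holds at $d=0$ too), but this does not affect the argument.
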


\begin{proof}

Observe that the following inequality holds for every positive integer $x$: $$ \frac{2}{x+1} -1\geq -\frac{x}{6}. $$   Since  $\sum_{i=1}^s d_i=2m$, summing it for every degree of $G$ confirms the statement.
\end{proof}

\subsection{Subcase $n\leq m$}

Next we show that 
$n\leq m$ implies that (A)
 $f(n,m)\geq  \frac{n}{m/n+1}$
 and 
 (B) $F(n,m)\geq \frac{2n}{2m/n+1}$ hold.

\begin{proof}[Proof of Theorem \ref{fo2}(ii)]
Part (A). Consider the edges which have exactly one color, and suppose that the color red is assigned to at least as many edges as the color blue. Thus the cardinality of the blue edges is at most $m/2$.  Let us take  a maximal independent set on the graph of the blue edges. This provides a monochromatic (red) subgraph in the original $K_n$. The cardinality of the maximal independent set is at least $\frac{n}{m/n+1}$ due to Corollary \ref{Turaaan}.

Part (B). Apply Corollary \ref{aloncikk} to the simple graph $H$  obtained from our digraph $G$ by deleting all bi-oriented edges and omitting the orientation for the rest of the edges. Consider the obtained forest subgraph $H'$ of $H$, and the original orientations of the edges of $H'$. These edges in $G$ obviously provides an acyclic orientation, with a suitable topological ordering, hence  the vertex set of $H'$ guarantees a transitive subtournament on at least $\frac{2n}{2m/n+1}$ vertices. \end{proof}



We highlight the connection between the feedback vertex set problems (deadlock recovery) and Theorem \ref{fo2} (ii)(B).
   A feedback vertex set of a graph (or digraph) is a set of vertices whose removal leaves a graph without  (directed) cycles. In other words, each feedback vertex set contains at least one vertex of any cycle in the graph. Since the feedback vertex sets play a prominent role in the study of deadlock recovery in operating systems, it has been studied extensively \cite{Fomin, Razgon, Posa, Festa,  Karp}.   To find the minimal feedback vertex set is NP-complete in the undirected and directed case as well \cite{Karp}.

 Essentially,  $F(n,m)$ is equal to the size of the maximal size of an acyclic subgraph which is guaranteed  to be in a digraph of $m$ directed edges on $n$ vertices, that is, $n-F(n,m)$ is the minimal size of a feedback vertex set, such that the removal of an appropriate set of $n-F(n,m)$ vertices makes any digraph of $m$ edges acyclic. Instead, in the proof of Theorem \ref{fo2} (ii)(B)  we bound the size of the minimal size of a feedback vertex sets of undirected graphs of $m$ edges, which generally provides a fairly weaker bound, even if they both lead to exact results when $m$ is small via Theorem \ref{fo2} (i). 

\bigskip

\noindent Before stating upper bounds for $f(n,m)$ and prove Theorem \ref{fo2} (iii), recall that $\vec{R}(n)$ denoted the directed Ramsey number, and $F(\vec{R}(k+1)-1)=k$. 

\begin{prop}\label{bound}
Let $|E_{bi}(G)|={n \choose 2}-m$, where $m=  n\cdot( \frac{\vec{R}(k+1)}{2}-1)$. Suppose that ${\vec{R}(k+1)-1}~\mid~n$. Then 
$$F(n,m)\leq  \frac{k}{\vec{R}(k+1)-1}\cdot n .$$


\end{prop}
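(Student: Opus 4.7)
The plan is to give a constructive upper bound, in the same spirit as Theorem \ref{kons}. The idea is to partition the vertex set into blocks on which we place extremal tournaments (containing no large transitive subtournament), while all inter-block edges are made bi-oriented. The bi-oriented inter-block edges do not restrict transitivity, so the obstruction to building a large transitive subtournament will come entirely from the intra-block tournaments, which we size precisely using the definition of $\vec{R}$.

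More concretely, let $r := \vec{R}(k+1) - 1$, and partition $V(G)$ into $n/r$ groups $V_1, \dots, V_{n/r}$ of size $r$ each (possible by the divisibility assumption). On each $V_j$, place a tournament $T_j$ on $r$ vertices whose largest transitive subtournament has size exactly $k$; such a tournament exists by definition of $\vec{R}(k+1)$, since $r = \vec{R}(k+1) - 1$ means that not every tournament on $r$ vertices contains $TT_{k+1}$, and we have $F(r) = k$. For every pair of vertices $u \in V_i$, $v \in V_j$ with $i \neq j$, place both directed edges $\overrightarrow{uv}$ and $\overrightarrow{vu}$ in $G$, i.e., make the edge bi-oriented.

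Next I would verify the edge count: the one-way directed edges are exactly those inside the blocks, giving
$$\frac{n}{r} \cdot \binom{r}{2} = \frac{n}{r} \cdot \frac{r(r-1)}{2} = \frac{n(r-1)}{2} = n\left(\frac{\vec{R}(k+1)}{2} - 1\right) = m,$$
as required. Then I would bound the size of any transitive subtournament $T$ of $G$. Since the inter-block edges are bi-oriented, they impose no constraint on any topological ordering, so $T \cap V_j$ must itself induce a transitive subtournament of $T_j$. By the choice of $T_j$, we have $|T \cap V_j| \le k$ for every $j$. Summing over the $n/r$ blocks yields $|T| \le k \cdot n/r = \frac{kn}{\vec{R}(k+1)-1}$, which is the desired bound.

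There is no real obstacle here, as the whole argument is a blocks-plus-bi-orientation construction; the only subtlety is the bookkeeping that ties $m$ to $n \cdot (\vec{R}(k+1)/2 - 1)$, and the observation that being semicomplete but with bi-oriented inter-block edges makes transitivity decouple across blocks. The divisibility hypothesis is needed only to make the groups split evenly; without it one would need to round and lose a lower-order term.
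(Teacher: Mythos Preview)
Your proposal is correct and follows essentially the same approach as the paper: partition the $n$ vertices into $n/(\vec{R}(k+1)-1)$ blocks, place on each block a tournament with no transitive subtournament of size $k+1$, and bi-orient all inter-block edges. Your version is slightly more detailed (you verify the edge count explicitly and spell out why transitivity decouples across blocks), but the construction and the pigeonhole conclusion are identical to the paper's.
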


\begin{proof} There exists a tournament $T$ on ${\vec{R}(k+1)-1}$ vertices which does not contain a transitive tournament of size $k+1$. Take $\frac{n}{\vec{R}(k+1)-1}$ disjoint copy of it, and join every pair of vertices with bi-oriented edges which are not in the same copy of $T$. Clearly, the size of the maximal transitive tournament of this graph is at most $k \cdot \frac{n}{\vec{R}(k+1)-1}$.
\end{proof}

\begin{prop}\label{bound2}
Let $|E_{RB}(G)|={n \choose 2}-m$, and let $C$ denote  $\lfloor m/n \rfloor$, and suppose $C+1  \leq  n/(C+~1)\in ~\mathbb{Z}^+$. 
Then 
$$f(n,m)\leq  \frac{n}{C+1} .$$


\end{prop}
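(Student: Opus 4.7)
The plan is to exhibit an explicit two-coloring of $K_n$ with exactly $m$ unicolored edges in which every monochromatic clique has at most $k := n/(C+1)$ vertices. Set $n = k(C+1)$ and label the vertices as entries of a $k \times (C+1)$ grid, $V = \{(i,j) : 0 \le i \le k-1,\ 0 \le j \le C\}$.

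I declare the \emph{red-only} edges to be precisely those inside the $k$ pairwise disjoint row-cliques $R_i := \{(i,0),(i,1),\ldots,(i,C)\}$. To define the \emph{blue-only} edges I exploit the hypothesis $C+1 \le k$ and set $B_r := \{(i,j) : i+j \equiv r \pmod{k}\}$ for $0 \le r \le k-1$; since for each column $j$ there is exactly one $i$ with $i+j \equiv r \pmod{k}$, each $B_r$ has size $C+1$ and $\{B_r\}_{r=0}^{k-1}$ partitions $V$. The edges inside each $B_r$ are colored blue-only and all remaining edges of $K_n$ are bicolored. Because $C < k$, the residues $i, i+1, \ldots, i+C$ are distinct modulo $k$, so within any row of the grid all $C+1$ vertices lie in different $B_r$'s; this shows that the red and blue clique partitions are edge-disjoint, so the coloring is well-defined.

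A straightforward count yields $k\binom{C+1}{2} = nC/2$ red-only edges and the same number of blue-only edges, giving $nC = m$ unicolored edges in total, as required. Finally, if $S \subseteq V$ spans a monochromatic red clique then $S$ contains no blue-only edge, hence $|S \cap B_r| \le 1$ for every $r$, and the partition property forces $|S| \le k = n/(m/n+1)$. The symmetric argument applied to the partition $\{R_i\}$ bounds monochromatic blue cliques in the same way, completing the proof.

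The delicate point is the edge-disjointness of the two clique partitions, which is exactly where the hypothesis $C+1 \le n/(C+1)$ is used: it guarantees that the cyclic-shift assignment $(i,j) \mapsto i+j \pmod{k}$ sends each row injectively into $[k]$. Once this balanced packing of two copies of $k \cdot K_{C+1}$ inside $K_n$ is available, the clique bound itself is the trivial observation that an independent set in a disjoint union of $k$ cliques has size at most $k$.
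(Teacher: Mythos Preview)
Your proof is correct and follows essentially the same approach as the paper: construct two edge-disjoint partitions of $V(K_n)$ into $k=n/(C+1)$ cliques of size $C+1$, declare one family to carry the red-only edges and the other the blue-only edges, and then bound any monochromatic clique by the pigeonhole principle. The only difference is cosmetic: the paper realizes the second packing by cutting a lexicographic order into consecutive blocks of length $C+1$, whereas you use the diagonal classes $B_r=\{(i,j):i+j\equiv r\pmod k\}$; both constructions exploit $C+1\le k$ in the same way to ensure edge-disjointness.
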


\begin{proof} Let $r$ be the remainder in the division $n:(C+1)$.
Take the disjoint union of $\lfloor n/(C+1)\rfloor$  cliques of size $C+1$, and a clique of size $r$ and call it $K$. Observe first that the number of edges is at most $m/2$ in $K$. Consider now $K'$ and $K''$, two copies of $K$. Since $C+1  \leq  n/(C+~1)$, it is easy to see that one can pack them into a complete graph $K_n$, i.e. there exists an edge-disjoint
placement of them onto the same set of $n$ vertices. In fact, this is a very special case of the famous Hajnal-Szemer\'edi theorem \cite{eha}, which states that if $H$ is an $n$-vertex graph with $\Delta(G) \leq q$, then $H$ packs with the graph H' whose components are complete graphs of size $\lfloor n/(q+1)\rfloor$ or $\lceil n/(q+1)\rceil$. 
Color the edges of $K'$ blue and the edges of $K''$ red.
Hence, if all the other edges of $K_n$ are colored with both colors, then at most $m$ will be the number of  unicolored edges, while a monochromatic clique has at most $\lfloor n/(C+1)\rfloor+1$ vertices due to the pigeon-hall principle.
\end{proof}

As a consequence Theorem \ref{fo2} (iii) follows.

\noindent Theorem \ref{fo2} (ii) and \ref{bound2} implies together that equality can be attained in the two-colored case, if some divisibility conditions hold and  $m$ is not too large.

\begin{cor} If  $m/n=C \in \mathbb{Z}^+$, and $ C+1 \leq n/ (C+1)\in \mathbb{Z}^+$, then 
$$f(n,m)= \frac{n}{m/n+1}.$$ This method thus provides exact result till $m \leq n^{3/2}$ holds. 
\end{cor}
If the order of magnitude is bigger than $n^{3/2}$, the graph cannot contain a disjoint union of packings  of monochromatic cliques of size $C+1$, from  both red and blue color, hence the lower bound cannot be attained.

\section{Improving the upper bound for $F(n)$}

In the following section, we consider simple oriented graphs. 

\begin{lemma} [Lov\'asz Local Lemma] \cite{local}
Let $A_1, A_2,\ldots, A_m$ be a series of events such that each event occurs with probability at most $P$ and such that each event is independent of all the other events except for at most $d$ of them. If $Pe(d+1)<1$, then there is a nonzero probability that none of the events occurs.
\end{lemma}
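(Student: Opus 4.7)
The plan is to prove the slightly more general asymmetric form of the lemma, from which the symmetric statement above follows by a direct optimisation of weights. Specifically, I would introduce auxiliary parameters $x_i \in [0,1)$ and aim to establish, by induction on $|S|$, the key inequality
$$\Pr\bigl(A_i \;\big|\; \bigcap_{j\in S}\overline{A_j}\bigr) \le x_i$$
for every subset $S$ of indices and every $i \notin S$. Once this bound is in hand, the conclusion follows from the chain rule: expanding $\Pr(\bigcap_i \overline{A_i})$ as a telescoping product of conditional probabilities and applying the bound to each factor yields $\Pr(\bigcap_i \overline{A_i}) \ge \prod_i (1 - x_i) > 0$.

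The core of the argument is the inductive step. Write $S_1 = S \cap N(i)$ for the indices in $S$ whose events depend on $A_i$, and $S_2 = S \setminus S_1$ for the rest. I would rewrite the conditional probability as the ratio
$$\frac{\Pr\bigl(A_i \cap \bigcap_{j\in S_1}\overline{A_j} \;\big|\; \bigcap_{j\in S_2}\overline{A_j}\bigr)}{\Pr\bigl(\bigcap_{j\in S_1}\overline{A_j} \;\big|\; \bigcap_{j\in S_2}\overline{A_j}\bigr)}.$$
The numerator is at most $\Pr(A_i \mid \bigcap_{j\in S_2}\overline{A_j}) = \Pr(A_i) \le P$, where the equality exploits the mutual independence of $A_i$ from the events indexed by $S_2$. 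For the denominator, I would expand it as a telescoping product $\prod_{j\in S_1}\Pr(\overline{A_j}\mid\cdots)$ and apply the inductive hypothesis to each factor; since each factor conditions on a proper subset of $S$, the induction is legitimate and yields the lower bound $\prod_{j\in S_1}(1 - x_j)$.

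Combining these estimates gives $\Pr(A_i \mid \bigcap_{j\in S}\overline{A_j}) \le P / \prod_{j\in N(i)}(1 - x_j)$, so the inductive claim closes provided the weights satisfy $P \le x_i \prod_{j\in N(i)}(1 - x_j)$. In the symmetric setting of the lemma the natural choice is $x_i = 1/(d+1)$, and the condition then reduces to $P \le \tfrac{1}{d+1}\bigl(1-\tfrac{1}{d+1}\bigr)^d$. Using the standard estimate $\bigl(1-\tfrac{1}{d+1}\bigr)^d > 1/e$, this is implied by the hypothesis $Pe(d+1) < 1$. The main delicate point I expect will be the telescoping manipulation of the denominator: one must order the events of $S_1$ carefully so that at each step the inductive hypothesis applies to a strictly smaller conditioning set, and handle separately the degenerate case $S_1 = \emptyset$, where the fraction collapses and the direct bound $\Pr(A_i) \le P \le x_i$ (valid because $\prod_{j\in N(i)}(1-x_j) \le 1$) closes the induction immediately.
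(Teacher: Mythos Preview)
Your sketch is the standard inductive proof of the asymmetric Local Lemma and is correct in all essentials, including the splitting $S = S_1 \cup S_2$, the independence step in the numerator, the telescoping lower bound on the denominator, and the specialisation $x_i = 1/(d+1)$ together with $(1-\tfrac{1}{d+1})^d > 1/e$. The paper itself does not prove this lemma at all: it is quoted as a known result from \cite{local} and used as a black box in the subsequent theorem, so there is no argument in the paper to compare against. Your proposal thus goes well beyond what the paper does here.
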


\begin{theorem}
$F(n)<2\log_2{n}-1 +o(1)$. 
\end{theorem}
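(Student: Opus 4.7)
The plan is to apply the Lovász Local Lemma to the uniformly random tournament on $n$ vertices; the Erdős–Moser bound $F(n) < 2\log_2 n + 1$ comes from the first moment, and LLL should save a constant of roughly $2$ here, which is exactly what the claim asks for. Let $T$ be a random tournament on $[n]$, and for each $k$-subset $S$ let $A_S$ be the event that $T[S]$ is transitive. Since there are $k!$ transitive orientations on $k$ labeled vertices, out of $2^{\binom{k}{2}}$ orientations,
$$P(A_S) \;=\; \frac{k!}{2^{\binom{k}{2}}}.$$
The crucial structural point is that $A_S$ depends only on the edges inside $S$, so $A_S$ and $A_{S'}$ are mutually independent whenever $|S\cap S'|\le 1$. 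Hence each $A_S$ is independent of all but at most
$$d \;\leq\; \binom{k}{2}\binom{n-2}{k-2}$$
other events (choose the pair of shared vertices, then the remaining $k-2$).

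Applying the symmetric LLL, the condition $eP(A_S)(d+1)<1$ forces $F(n)<k$. Using $k!/(k-2)! = k(k-1)$, a short simplification shows this is equivalent to
$$2^{\binom{k}{2}} \;>\; \tfrac{e}{2}\,k^2(k-1)^2\,(n-2)(n-3)\cdots(n-k+1),$$
which in logarithmic form becomes
$$\binom{k}{2} \;\geq\; (k-2)\log_2 n \;+\; 4\log_2 k \;+\; O(1). \qquad (\ast)$$

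Next I would set $k = 2\log_2 n - 1 + \varepsilon_n$ with $\varepsilon_n$ to be chosen tending to $0$, and expand $(\ast)$ by writing $L := \log_2 n$. A direct expansion gives
$$\binom{k}{2} - (k-2)L \;=\; 1 \;+\; \varepsilon_n (L - \tfrac{3}{2}) \;+\; \tfrac{1}{2}\varepsilon_n^{\,2},$$
so the LLL inequality requires $\varepsilon_n L \gtrsim 4\log_2 k = O(\log L)$; taking $\varepsilon_n = C(\log\log n)/\log n$ for a sufficiently large absolute constant $C$ suffices. This gives $F(n) < 2\log_2 n - 1 + \varepsilon_n = 2\log_2 n - 1 + o(1)$, as claimed.

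The main obstacle is the careful asymptotic bookkeeping: the saving of $-1$ (versus $+1$ in the Erdős–Moser bound) is exactly the constant $1$ appearing in $\binom{k}{2} - (k-2)\log_2 n$ at $k=2\log_2 n-1$, so the lower-order terms from $\binom{n-2}{k-2}$, $k!$, and the factor $e$ of the Local Lemma must all be tracked precisely enough to verify they are absorbed by $\varepsilon_n L = \omega(\log\log n)$. Beyond this vigilance the argument is mechanical, since the independence structure of the events $A_S$ is entirely governed by the trivial criterion $|S\cap S'|\le 1$.
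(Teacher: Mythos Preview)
Your proposal is correct and follows essentially the same approach as the paper: apply the symmetric Lov\'asz Local Lemma to the events $A_S=\{T[S]\text{ is transitive}\}$ in a uniformly random tournament, using that $A_S$ and $A_{S'}$ are independent when $|S\cap S'|\le1$. The only cosmetic differences are that the paper bounds $d$ by the exact sum $\sum_{j\ge2}\binom{k}{j}\binom{n-k}{k-j}$ and then dominates it by $1.2\,\binom{k}{2}\binom{n-k}{k-2}$ via a geometric series (valid for $k<\sqrt{n}$), whereas you use the cleaner overcount $\binom{k}{2}\binom{n-2}{k-2}$; both lead to the same logarithmic inequality $(k-2)\log_2 n<\binom{k}{2}-4\log_2 k-O(1)$ and the same $o(1)$ term of order $(\log\log n)/\log n$.
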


\begin{proof}
Fix $n\geq55$, and let $\sqrt{n}>k>4$.
Let $T_n$ be a random tournament such that every edge is oriented in one direction uniformly at random, independently from the orientations of all the other edges. Let $A_i$ be the event that a given $k$-subset $X^{(k)}_i$ of $V(T_n)$ is a transitive tournament.\\ Clearly, $P(A_i)=\frac{k!}{2^{k\choose 2}}$.\\ $A_i$ is independent of all events $A_j$ for which $|X^{(k)}_i \cap X^{(k)}_j|\leq 1$. This yields
$$ d\leq {n \choose k} - {n-k \choose \ k} - k{n-k \choose k-1} = \sum_{j=2}^k{{k \choose j}{n-k \choose k-j}}.$$

Increasing the right hand side, we get
$$ d< {k \choose 2}{n-k \choose k-2}\left( 1+ \left[\frac{k-2}{3}\frac{k-2}{(n-2k+3)}\right]+ \left[\frac{k-2}{3}\frac{k-2}{(n-2k+3)}\right]^2+ \left[\frac{k-2}{3}\frac{k-2}{(n-2k+3)}\right]^3+ \ldots \right)$$
Hence
$$d< {k \choose 2}{n-k \choose k-2}\left( \frac{1}{1-\frac{(k-2)^2}{3(n-2k+3)}}\right).$$
Applying $\sqrt{n}>k>4$, we get that the sum of the above geometric progression is less than $1.2$.

In view of the Lov\'asz Local Lemma, if $e(d+1)P(A_i)<1$, then\\ $P$(no transitive k-subtournaments in $T_n$) $>0$, thus $f(n)<k$.\\
Therefore if $$e\cdot \frac{k^2}{2}\frac{n^{k-2}}{(k-2)!}1.2\cdot \frac{k^2(k-2)!}{2^{k\choose 2}}<1,$$ then $F(n)<k$.
This implies $$(k-2)\log_2{n}<{k\choose 2}-4\log_2{k}-0.7.$$ Consequently,
 $$2\log_2{n}-1<k+\frac{2-8\log_2{k}-1.4}{(k-2)},$$
which completes the proof.
\end{proof}

\noindent \textbf{Acknowledgments}\\
I am very grateful to \'Agnes T\'oth, Andr\'as Gy\'arf\'as and Yair Caro for their valuable suggestions 
which helped  to substantially improve the manuscript.


\end{document}